\documentclass[10pt]{amsart}

\makeatletter
\def\blfootnote{\gdef\@thefnmark{}\@footnotetext}
\makeatother

\usepackage{epsfig}
\usepackage{graphics}
\usepackage{dcpic, pictexwd}
\usepackage{float}
\usepackage[colorlinks=true,
                    linkcolor=blue,
                    urlcolor=blue,
                    citecolor=blue,
                    anchorcolor=blue]{hyperref}
\usepackage{mathtools}
\usepackage{amsmath,amssymb}

\theoremstyle{plain}

\newtheorem*{theorem*}{Theorem}

\newtheorem{theorem}{Theorem}[section]

\newtheorem{lemma}[theorem]{Lemma}

\theoremstyle{remark}

\theoremstyle{Acknowledgments}

\newtheorem*{main}{Main Theorem}

\theoremstyle{definition}



\setlength{\textwidth}{17.5cm}\oddsidemargin=-1cm\evensidemargin=-1cm

\def\mod{{\rm Mod}}

\begin{document}
\blfootnote{\textup{2000} \textit{Mathematics Subject Classification}:
57M07, 20F05, 20F38}
\blfootnote{\textit{Keywords}:
Mapping class groups, punctured surfaces, involutions, generating sets}
\newenvironment{prooff}{\medskip \par \noindent {\it Proof}\ }{\hfill
$\square$ \medskip \par}
    \def\sqr#1#2{{\vcenter{\hrule height.#2pt
        \hbox{\vrule width.#2pt height#1pt \kern#1pt
            \vrule width.#2pt}\hrule height.#2pt}}}
    \def\square{\mathchoice\sqr67\sqr67\sqr{2.1}6\sqr{1.5}6}
\def\pf#1{\medskip \par \noindent {\it #1.}\ }
\def\endpf{\hfill $\square$ \medskip \par}
\def\demo#1{\medskip \par \noindent {\it #1.}\ }
\def\enddemo{\medskip \par}
\def\qed{~\hfill$\square$}

 \title[ Generators of $\mod(N_{g,p})$] {Generators of the Mapping Class Group of a Nonorientable Punctured Surface}

\author[T{\"{u}}l\.{i}n Altun{\"{o}}z,       Mehmetc\.{i}k Pamuk, and O\u{g}uz Y{\i}ld{\i}z ]{T{\"{u}}l\.{i}n Altun{\"{o}}z,    Mehmetc\.{i}k Pamuk, and O\u{g}uz Y{\i}ld{\i}z}
\address{Faculty of Engineering, Ba\c{s}kent University, Ankara, Turkey} 
\email{tulinaltunoz@baskent.edu.tr} 
\address{Department of Mathematics, Middle East Technical University,
 Ankara, Turkey}
 \email{mpamuk@metu.edu.tr}
 \address{Department of Mathematics, Middle East Technical University,
 Ankara, Turkey}
  \email{oguzyildiz16@gmail.com}


\begin{abstract}   Let $\mod(N_{g, p})$ denote the mapping class group of a  nonorientable surface of genus $g$ with $p$ punctures. 
For $g\geq14$, we show that $\mod(N_{g, p})$ can be generated by five elements or by six involutions.  
\end{abstract}
\maketitle
  \setcounter{secnumdepth}{2}
 \setcounter{section}{0}
 
\section{Introduction}
Let $N_{g,p}$ denote a nonorientable surface of genus $g$ with $p$ punctures specified by the set $P=\lbrace z_1, z_2, \ldots, z_p\rbrace$ of $p$ distinguished points.
If $p$ is zero,  we omit it from the notation and denote the surface by $N_{g}$.   {\textit{The mapping class group}} 
$\mod(N_{g, p})$ of the surface $N_{g, p}$ is defined to be the group of the isotopy classes of self-diffeomorphisms of $N_{g, p}$ which preserve the set $P$.   
In this paper, we consider only connected surfaces and we are interested in finding minimal generating sets for $\mod(N_{g, p})$.

For an orientable surface, the mapping class group is defined as the group of isotopy classes of orientation-preserving diffeomorphisms. 
For orientable surfaces, it is known that the mapping class group is generated by Dehn twists and elementary braids. 
However, if the surface  is nonorientable,  Dehn twists do not generate the mapping class group.   
Lickorish was the first to consider the problem of finding generators of the group $\mod(N_{g})$. He proved that $\mod(N_{g})$ is generated by Dehn twists 
and the isotopy class of a homeomorphism he called the $Y$-homeomorphism~\cite{lickorish}. Chillingworth~\cite{c} determined a finite set of generators for $\mod(N_{g})$. 
The cardinality of this set depends on $g$. Korkmaz~\cite{mk4} extended Chillingworth’s result to the case of punctured surfaces.  

Since $\mod(N_g)$  is not abelian, a generating set must contain at least two elements. Szepietowski~ \cite{sz2}  proved that this group is generated by three elements for all $g \geq 3$.
We~\cite{apy2} prove that, for $g \geq 19$ the mapping class group of a nonorientable surface of genus $g$, $\mod(N_{g})$, can be generated by two elements, one of which is of order $g$.  
 
In the presence of punctures, Szepietowski~\cite[Theorem~3]{sz1}  proved that $\mod(N_{g, p})$ is generated by involutions.   We prove that for $g \geq 26$, $\mod(N_{g})$  can be generated 
by three involutions if $g \geq 26$ ~\cite{apy2}.  Recently, Yoshihara~\cite{yoshihara}  proved that, for $p \geq0$,  $\mod(N_{g, p})$ is generated by eight involutions if $g \geq 13$ is odd and 
by eleven involutions if $g \geq 14$ is even.


In this paper, we obtain the following main result (see Theorems \ref{element} and \ref{invo}):
\begin{main}\label{main}
For $g\geq 14$ and $p\geq 1$, $\mod(N_{g,p})$ is generated by five elements or six involutions.
\end{main}

The paper is organized as follows: In Section~\ref{S2}, we present the necessary background and some results on mapping class groups. 
The proof of our main theorem is given in Section~\ref{S3}.

\medskip

\noindent
{\bf Acknowledgements.}
This work is supported by the Scientific and Technological Research Council of Turkey (T\"{U}B\.{I}TAK)[grant number 120F118].


\par  
\section{Background and Results on Mapping Class Groups} \label{S2}

 
 Let $\mod_{0}(N_{g,p})$ denote the subgroup of $\mod(N_{g,p})$ consisting of elements which fix the set $P$ pointwise. 
This group is a normal subgroup of $\mod(N_{g,p})$ of index $p!$ and  we have the following exact sequence:
 \[
1\longrightarrow \mod_{0}(N_{g,p})\longrightarrow \mod(N_{g,p}) \longrightarrow Sym_{p}\longrightarrow 1,
\]
where $Sym_p$ is the symmetric group on the set $\lbrace1,2,\ldots,p\rbrace$ and the last projection is given by the restriction of the isotopy class of a diffeomorphism to its action on the punctures. \par
 Throughout the paper we do not distinguish a 
 diffeomorphism from its isotopy class. For the composition of two diffeomorphisms, we
use the functional notation; if $f$ and $g$ are two diffeomorphisms, then
the composition $fg$ means that $g$ is applied first and then $f$.\\
\indent
 For a two-sided simple closed 
curve $a$ on $N_{g,p}$, we denote the right-handed 
Dehn twist $t_a$ about $a$ by the corresponding capital letter $A$.

Now, let us remind the following basic facts of Dehn twists that we use frequently in the remaining of the paper. Let $a$ and $b$ be two-sided
simple closed curves on $N_{g,p}$ and $f\in \mod(N_{g,p})$.
\begin{itemize}
\item  If $a$ and $b$ are disjoint, then $AB=BA$ (\textit{Commutativity}).
\item If $f(a)=b$, then $fAf^{-1}=B^{\varepsilon}$, where $\varepsilon=\pm 1$ depends on the orientation of a regular neighbourhood of $f(a)$ with respect to the chosen orientation.  (\textit{Conjugation}).
\end{itemize}
Throughout the paper we use repeatedly the conjugation relation and we denote $fgf^{-1}$ by $g^{f}$ for any $f,g \in \mod(N_{g,p})$.\\
\noindent
We now define two types of diffeomorphisms of $\mod(N_{g,p})$; $y$ homeomorphism (crosscap slide) or puncture slide.
\begin{figure}[hbt!]
\begin{center}
\scalebox{0.3}{\includegraphics{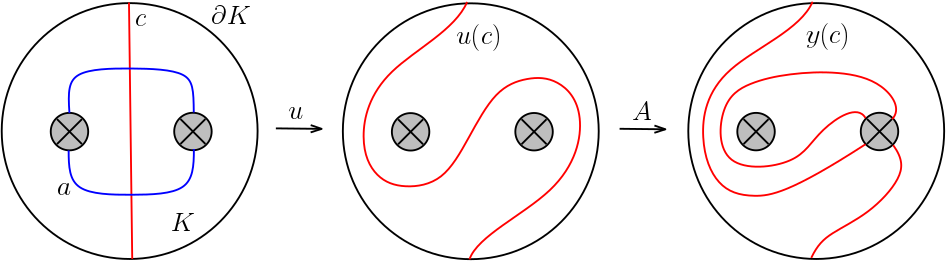}}
\caption{The homeomorphisms $u$ and $y = Au$.}
\label{CS}
\end{center}
\end{figure}
Let us consider the Klein bottle $K$ depicted in Figure~\ref{CS}. A \textit{crosscap transposition} $u$ is defined to be the isotopy classes of a diffeomorphism that interchanges two consecutive crosscaps and keeps the boundary of $K$ fixed as shown on the left hand side of Figure~\ref{CS}. The diffeomorphism $y = Au$ whose effect on the interval $c$ as in Figure~\ref{CS} can be also obtained as sliding a M{\"{o}}bius band once along the core of another one and fixing each point of the boundary of $K$. This is called a \textit{ $Y$-homeomorphism}~\cite{lickorish} (also called a \textit{crosscap slide}~\cite{mk4}).  Let us note that $A^{-1}u$ is a $Y$-homemorphism i.e. the other choice of the orientation for a neighbourhood of the curve $a$ also gives a $Y$-homeomorphism. Also note that $y^{2}$ is a Dehn twist about $\partial K$.

Lickorish~\cite{lickorish} proved that $\mod(N_g)$ is generated by Dehn twists and a $Y$ homeomorphism ( one crosscap slide). Note that crosscap transpositions can be used instead of crosscap slides in generating sets of $\mod(N_g)$ since a crosscap transposition is equal to the product of a Dehn twist and a crosscap slide.

For the other type of diffeomorphisms of $\mod(N_{g,p})$, consider the M{\"{o}}bius band $M$ with a puncture $z$ and the one-sided simple closed curve $a$ in Figure~\ref{PS1}.
\begin{figure}[hbt!]
\begin{center}
\scalebox{0.3}{\includegraphics{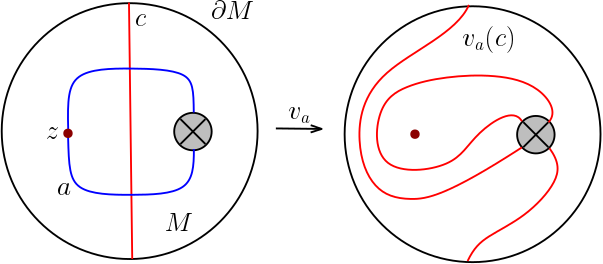}}
\caption{The puncture slide $v_a$.}
\label{PS1}
\end{center}
\end{figure}
We denote $v_a$ the \textit{puncture slide} obtained by sliding the puncture $z$ once along $a$ keeping each point of the boundary $M$ fixed. The effect of the diffeomorphism $v_a$ on the interval $c$ is shown as Figure~\ref{PS1}. Let us denote the puncture slide supported by the one-sided simple closed curve $\delta_i$ and the puncture $z_j$ by $v_{i,j}$ as shown in Figure~\ref{PS1}.
\begin{figure}[H]
\begin{center}
\scalebox{0.25}{\includegraphics{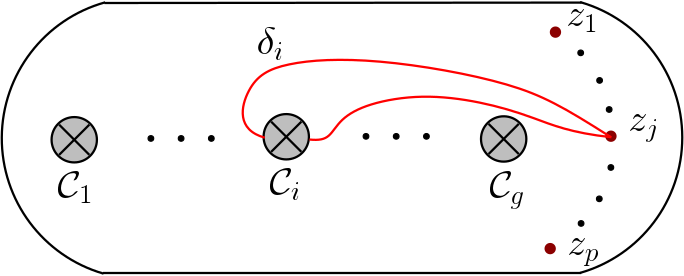}}
\caption{The puncture slide $v_{i,j}$.}
\label{PS2}
\end{center}
\end{figure}
Before we finish Preliminaries, let us give Korkmaz's generating set for $\mod_{0}(N_{g,p})$. 

Let $\mathcal{C}$ be the set of simple closed curves given by
\begin{eqnarray*}
\mathcal{C}&=&\lbrace a_i, b_i, f_i, c_j, e_k, \textrm{ for } i=1,\ldots,r, \quad j=1,\ldots,r-1 \textrm{ and }  k=1,\ldots, p-1 \textrm{ if } g=2r+1,\\
\textrm{ and }&\\ 
\mathcal{C}&=&\lbrace a_i, b_i, f_i, c_i, e_k, \textrm{ for } i=1,\ldots,r \textrm{ and }  k=1,\ldots, p-1 \textrm{ if } g=2r+2,
\end{eqnarray*}
where the curves are shown in Figure~\ref{G}.
\begin{figure}[hbt!]
\begin{center}
\scalebox{0.57}{\includegraphics{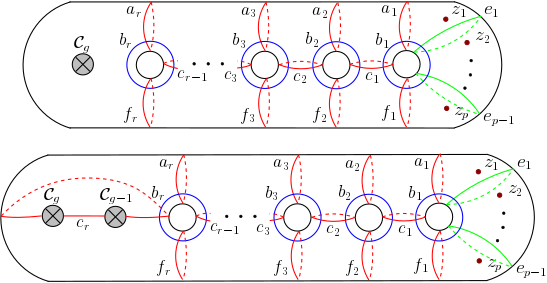}}
\caption{The Dehn twist generators for $\mod(N_{g,p})$.}
\label{G}
\end{center}
\end{figure}
\begin{theorem}\cite{mk4}\label{thmmk}
For $g\geq3$, the mapping class group $\mod_{0}(N_{g,p})$ is generated by
\begin{itemize}
\item[(i)] $\lbrace t_c, y, v_{g,i} :c \in \mathcal{C}, 1\leq i \leq p  \rbrace$ if $g$ is odd, and
\item[(ii)] $\lbrace t_c, y, v_{g-1,i}, v_{g,i} :c \in \mathcal{C}, 1\leq i \leq p  \rbrace$ if $g$ is even.
\end{itemize}
\end{theorem}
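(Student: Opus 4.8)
\medskip
\noindent\emph{Sketch of a proof.}
The plan is to induct on the number of punctures $p$, stripping off one puncture at a time by means of the Birman point-pushing exact sequence. For $p=0$ one has $\mod_0(N_{g,0})=\mod(N_g)$, and $\mathcal{C}$ then consists only of the curves $a_i,b_i,f_i,c_i$ while no puncture slides occur; so the claim reduces to the assertion that $\mod(N_g)$ is generated by $\{t_c : c\in\mathcal{C}\}\cup\{y\}$, which is the known finite generating set of $\mod(N_g)$ by Dehn twists about the curves of Figure~\ref{G} and one crosscap slide (Lickorish~\cite{lickorish}, Chillingworth~\cite{c}), once Chillingworth's twists are rewritten with the chain, lantern and conjugation relations. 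So assume $p\geq 1$ and that the theorem holds for $N_{g,p-1}$.

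For the inductive step I would forget the puncture $z_p$. Since $\chi(N_{g,p-1})<0$ for every $g\geq 3$, one has the exact sequence
\[
1 \longrightarrow \pi_1(N_{g,p-1},z_p) \xrightarrow{\ \mathrm{Push}\ } \mod_0(N_{g,p}) \longrightarrow \mod_0(N_{g,p-1}) \longrightarrow 1 ,
\]
so $\mod_0(N_{g,p})$ is generated by a set of lifts of the generators furnished by the inductive hypothesis for $N_{g,p-1}$, together with the image of $\mathrm{Push}$. One places $z_p$ in Figure~\ref{G} so that it is disjoint from every curve of $\mathcal{C}$ for $N_{g,p-1}$, from the Klein bottle supporting $y$, and from the M\"obius bands supporting $v_{g,i}$ and (in the even case) $v_{g-1,i}$ for $1\leq i\leq p-1$, the only new curve created being $e_{p-1}$; consequently these generators lift to members of the proposed generating set for $N_{g,p}$, and it remains to show that the image of $\mathrm{Push}$ lies in the subgroup $H$ generated by that set.

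To this end I would pick a generating set of the free group $\pi_1(N_{g,p-1},z_p)$ adapted to Figure~\ref{G}: loops $\sigma_1,\dots,\sigma_{p-1}$ each encircling one of the punctures $z_1,\dots,z_{p-1}$, which are two-sided, and loops $\rho_1,\dots,\rho_g$ each running once through a crosscap, which are one-sided. For a two-sided loop the push map is a product of Dehn twists about the two boundary curves of an annular neighbourhood of the loop (one of them being the twist about a curve bounding a once-punctured disk, hence trivial), and the curves arising for $\sigma_k$ are, by the way the $e_k$ are drawn in Figure~\ref{G}, carried into $\mathcal{C}$ for $N_{g,p}$ by elements of $H$---this is exactly what the curves $e_1,\dots,e_{p-1}$ are there for---so $\mathrm{Push}(\sigma_k)\in H$. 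For a one-sided loop $\rho$ the push map is a puncture slide $v_\rho$ of $z_p$ along $\rho$; using the conjugation relation $f v_\rho f^{-1}=v_{f(\rho)}$ with an $f\in H$ built from the lifted generators above and carrying $\rho$ onto the standard one-sided curve $\delta_g$ (respectively $\delta_{g-1}$), together with the relation expressing $v_\rho^{\,2}$ as a Dehn twist about the boundary of a M\"obius band, one sees that each $v_{\rho_j}$ lies in $H$. Hence $\mathrm{Image}(\mathrm{Push})\subseteq H$, which closes the induction.

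The main obstacle is the bookkeeping in the last step: one must choose the loops $\sigma_k$, $\rho_j$ so that each point-push is visibly a known twist or a standard puncture slide up to conjugation, and one must check that the mapping classes doing the conjugating---those carrying $\rho_j$ onto $\delta_g$ or $\delta_{g-1}$ and those identifying the auxiliary boundary curves with members of $\mathcal{C}$---really are expressible in the lifted generators, not merely in $\mod(N_{g,p})$. Both reduce to the change-of-coordinates principle for one- and two-sided curves on $N_{g,p}$ together with the chain, lantern and crosscap-slide relations; the even-genus case needs the two puncture slides $v_{g-1,i},v_{g,i}$ precisely because one standard one-sided curve does not suffice to reach every one-sided curve by elements of $H$ that fix the remaining punctures.
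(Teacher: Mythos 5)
The paper offers no proof of this statement---it is quoted from Korkmaz \cite{mk4}---so the comparison is with Korkmaz's original argument, which is indeed the induction on $p$ via the Birman exact sequence that you describe, with pushes along one-sided loops realized as puncture slides and pushes along two-sided loops as products of Dehn twists about the boundary of an annular neighbourhood. Your overall architecture is therefore the right one, and the treatment of the one-sided loops (conjugate the standard slides $v_{g,p}$, $v_{g-1,p}$ onto the cores of the other crosscaps) is essentially what happens in \cite{mk4}.

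There is, however, a genuine gap in your handling of the two-sided pushes. You assert that the curves arising from $\mathrm{Push}(\sigma_k)$ are ``carried into $\mathcal{C}$ by elements of $H$.'' But $H$ is a subgroup of $\mod_0(N_{g,p})$, so every element of $H$ fixes each puncture and hence preserves the set of punctures enclosed by any curve bounding a punctured disk. The push of $z_p$ around $z_k$ is, up to the trivial twist, the Dehn twist about a curve enclosing exactly $\{z_k,z_p\}$, and for $k<p-1$ no such curve is $H$-equivalent to one enclosing $\{z_{p-1},z_p\}$, nor to any fixed member of $\mathcal{C}$; in fact the curves $e_k$ of Figure~\ref{G} are nonseparating (the paper gets $E_{p-1}$ as $(A_1^{-1})^{\rho_4}$ with $a_1$ nonseparating), so they are not boundary curves of neighbourhoods of your $\sigma_k$ at all. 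Since $\pi_1(N_{g,p-1},z_p)$ is free of rank $g+p-2$, for $p\geq 3$ you genuinely need the pushes around $p-2$ of the old punctures, and these must be manufactured from the nonseparating twists by relations (lantern and chain relations, as in Section 4 of \cite{mk4} and as in the lantern computation inside Lemma~\ref{lemma2} of this paper), not by conjugation inside $\mod_0$. Relatedly, your explanation of why even genus needs both $v_{g-1,i}$ and $v_{g,i}$ is a conjecture rather than an argument; the dichotomy falls out of the analysis of which one-sided pushes are reachable from the standard ones, and it has to be proved. These two points are precisely where the substance of Korkmaz's proof lies, so as it stands the sketch omits the hard part and replaces one step with a mechanism that provably cannot work.
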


\section{A generating set for $\mod(N_{g,p})$}\label{S3}
We, first  recall the following basic lemma from algebra.
\begin{lemma}\label{lemma1}
Let $G$ and $K$ be groups. Suppose that the following short exact sequence holds,
\[
1 \longrightarrow N \overset{i}{\longrightarrow}G \overset{\pi}{\longrightarrow} K\longrightarrow 1.
\]
Then the subgroup $\Gamma$ contains $i(N)$ and has a surjection to $K$ if and only if $\Gamma=G$.
\end{lemma}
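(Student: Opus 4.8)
The plan is to prove both directions of the biconditional, the reverse implication being trivial. If $\Gamma = G$, then certainly $\Gamma \supseteq i(N)$ and $\pi|_\Gamma = \pi$ is surjective since $\pi$ itself is surjective by exactness. So the content is in the forward direction: assuming $i(N) \subseteq \Gamma$ and $\pi(\Gamma) = K$, I want to conclude $\Gamma = G$.

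For the forward direction, I would take an arbitrary element $g \in G$ and show $g \in \Gamma$. Since $\pi$ restricted to $\Gamma$ is surjective onto $K$, there exists $\gamma \in \Gamma$ with $\pi(\gamma) = \pi(g)$. Then $\pi(\gamma^{-1} g) = \pi(\gamma)^{-1}\pi(g) = 1$ in $K$, so by exactness of the sequence at $G$ we have $\gamma^{-1} g \in \ker \pi = \operatorname{Im}(i) = i(N)$. By hypothesis $i(N) \subseteq \Gamma$, hence $\gamma^{-1} g \in \Gamma$. Since $\gamma \in \Gamma$ and $\Gamma$ is a subgroup, it follows that $g = \gamma(\gamma^{-1} g) \in \Gamma$. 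As $g$ was arbitrary, $G \subseteq \Gamma$, and the reverse containment is automatic, so $\Gamma = G$.

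There is no real obstacle here; this is a standard diagram-chase and the only thing to be careful about is invoking exactness in the right place (exactness at $G$ gives $\ker\pi = \operatorname{Im} i$, and surjectivity of $\pi$ is exactness at $K$). The lemma will later be applied with $G = \mod(N_{g,p})$, $N = \mod_0(N_{g,p})$, $K = Sym_p$, and $\Gamma$ the subgroup generated by the proposed five elements (or six involutions): one checks that $\Gamma$ contains the Korkmaz generators of $\mod_0(N_{g,p})$ from Theorem~\ref{thmmk} together with enough elements to surject onto $Sym_p$, and then Lemma~\ref{lemma1} immediately yields $\Gamma = \mod(N_{g,p})$.
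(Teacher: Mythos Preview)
Your proof is correct and is exactly the standard argument. Note that the paper does not actually give a proof of this lemma at all: it is stated as a ``basic lemma from algebra'' and simply recalled without justification, so there is nothing to compare against. Your interpretation of ``has a surjection to $K$'' as ``$\pi|_\Gamma$ is surjective'' is the intended one (and the only one under which the statement is true), and your last paragraph correctly anticipates how the lemma is applied in the rest of the paper.
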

\par

For $G=\mod(N_{g,p})$ and $N=\mod_{0}(N_{g,p})$, 
 the following short exact sequence holds:
\[
1\longrightarrow \mod_{0}(N_{g,p})\longrightarrow \mod(N_{g,p}) \longrightarrow Sym_{p}\longrightarrow 1.
\]
Therefore, the following useful result  follows immediately from Lemma~\ref{lemma1}. Let $\Gamma$ be a subgroup of $\mod(N_{g,p})$. If the subgroup $\Gamma$ contains $\mod_{0}(N_{g,p})$ and has a surjection to $Sym_p$ then $\Gamma=\mod(N_{g,p})$.

\begin{figure}[hbt!]
\begin{center}
\scalebox{0.3}{\includegraphics{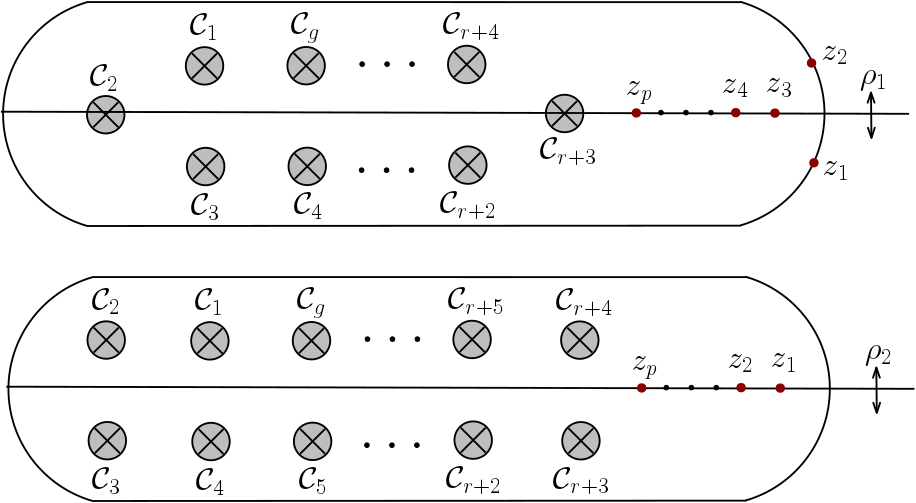}}
\caption{The reflections $\rho_1$ and $\rho_2$ on $N_{g,p}$ if $g=2r+2$.}
\label{ER12}
\end{center}
\end{figure}

\begin{figure}[hbt!]
\begin{center}
\scalebox{0.23}{\includegraphics{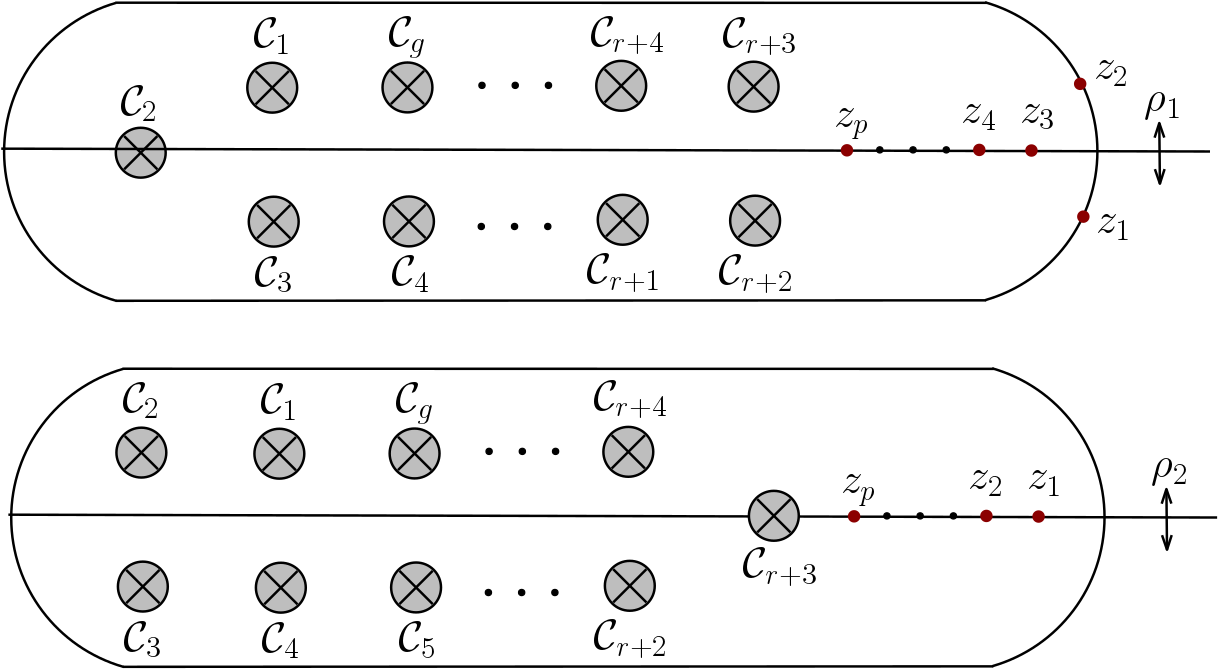}}
\caption{The reflections $\rho_1$ and $\rho_2$ on $N_{g,p}$ if $g=2r+1$.}
\label{OR12}
\end{center}
\end{figure}

\begin{figure}[hbt!]
\begin{center}
\scalebox{0.3}{\includegraphics{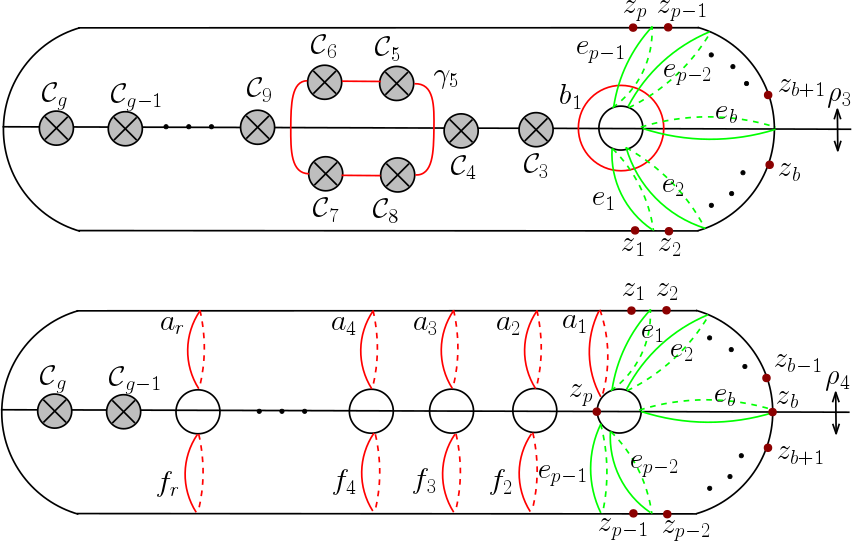}}
\caption{The reflections $\rho_3$ and $\rho_4$ on $N_{g,p}$ if $p=2b$  (when $g$ is odd we remove the last crosscap).}
\label{ER34}
\end{center}
\end{figure}

\begin{figure}[hbt!]
\begin{center}
\scalebox{0.27}{\includegraphics{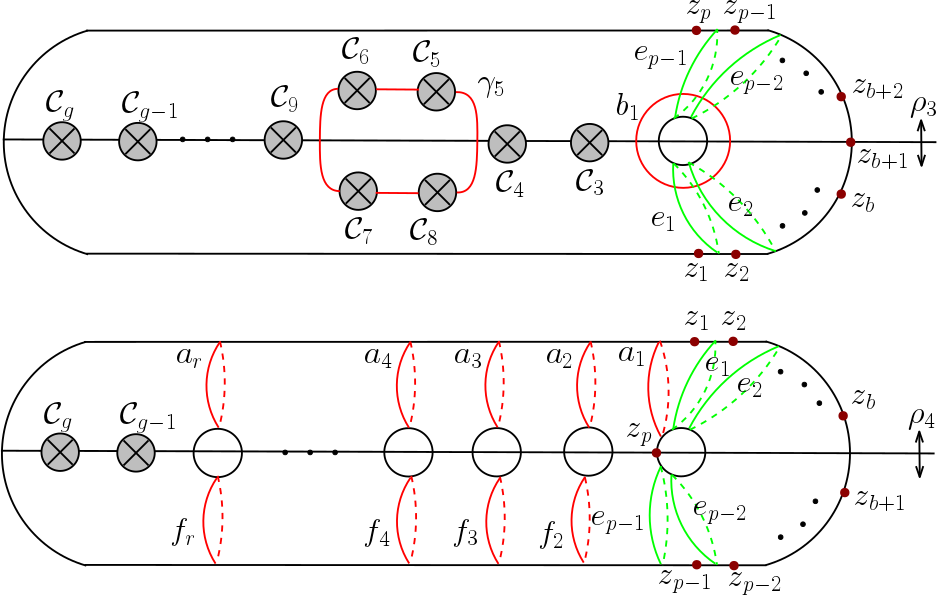}}
\caption{The reflections $\rho_3$ and $\rho_4$ on $N_{g,p}$ if $p=2b+1$  (when $g$ is odd we remove the last crosscap).}
\label{OR34}
\end{center}
\end{figure}
Some Dehn twists curves given in Theorem~\ref{thmmk} are shown in Figure~\ref{curves} when we use the model for the surface $N_{g,p}$ as a sphere with $p$ punctures and $g$ crosscaps.
\begin{figure}[hbt!]
\begin{center}
\scalebox{0.36}{\includegraphics{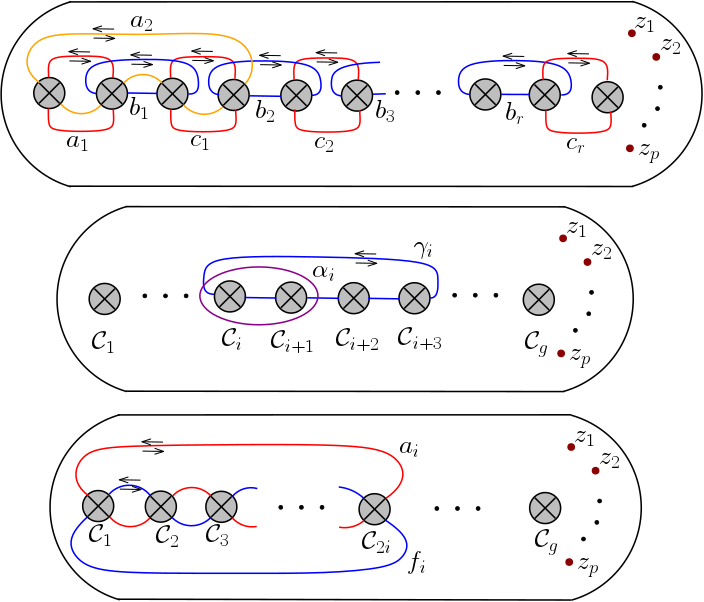}}
\caption{The curves $a_i,b_i,c_i,\gamma_i,f_i$ and $\alpha_i$ (note that we do not have $c_r$ when $g$ is odd).}
\label{curves}
\end{center}
\end{figure}

Consider the models for $N_{g,p}$ such that it is invariant under the reflections $\rho_1$ and $\rho_2$ (see Figures~\ref{ER12} and~\ref{OR12}). Note that $\mod(N_{g,p})$ contains the element $T=\rho_2\rho_1$ which satisfies the following:
\begin{itemize}
\item $T(b_i)=c_{i}$ and $T(c_j)=b_{j+1}$ for $i,j=1,\ldots, r-1$,
\item $T(a_1)=b_{1}$.  Also, note that $T^2(b_r)=a_1$ if $g=2r+1$ and $T(b_r)=c_r$, $T^2(c_r)=a_1$ if $g=2r+2$,
\item $T(z_1)=z_2$, $T(z_2)=z_1$ and $T(z_k)=z_k$ for $k=3, 4, \ldots, p$,
\item $T(\alpha_l)=\alpha_{l+1}$  for $l=1, 2, \ldots, g-1$ and $T(\alpha_g)=\alpha_1$
where all curves are depicted in Figure~\ref{curves}.
\end{itemize}
We will use two different diffeomorphism of $\mod(N_{g,p})$, the reflections $\rho_3$ and $\rho_4$ depicted in Figures~\ref{ER34} and ~\ref{OR34}, in the remaining part of the paper.

\begin{lemma}\label{lemma2}
For any  $g=2r+1\geq 15$ or $g=2r+2\geq 14$, the group generated by the elements 
\[
\lbrace T, \rho_3, \rho_4, \Gamma_5B_1u_{r+5}, v_{r+3,p} \rbrace
\]
contains the Dehn twists $A_i$, $B_i$, $F_i$ and $C_j$ for $i=1,\ldots,r$ and $j=1,\ldots,r-1$ (also the element $C_r$ when $g$ is even).
\end{lemma}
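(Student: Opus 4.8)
The plan is to argue entirely inside $G:=\langle T,\rho_3,\rho_4,\Gamma_5B_1u_{r+5},v_{r+3,p}\rangle$ and to produce the required twists in two stages: first I would pull a single genuine Dehn twist out of the mixed generator $w:=\Gamma_5B_1u_{r+5}$, and then I would spread that twist over the whole families $A_i,B_i,F_i,C_j$ by conjugating with the symmetries $T,\rho_3,\rho_4$, using throughout the conjugation relation $f\,t_c\,f^{-1}=t_{f(c)}^{\pm1}$ (with sign $-1$ whenever $f$ is one of the orientation-reversing reflections). The generator $v_{r+3,p}$ plays no role here; it will be needed only in the later lemmas producing $y$, the puncture slides, and the surjection onto $Sym_p$.

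For the first stage, the key observation is that $\gamma_5$, $b_1$ and the support of the crosscap transposition $u_{r+5}$ are pairwise disjoint, so $w$ is the product of the three commuting factors $\Gamma_5$, $B_1$ and $u_{r+5}$. I would single out among $\rho_3,\rho_4$ and their $T^k$-conjugates an element $\theta$ that fixes $\gamma_5$ and the crosscap pair carried by $u_{r+5}$ but sends $b_1$ to a curve $b_1'$ disjoint from $\gamma_5$ and from that pair; since $\theta$ is orientation-reversing, $\theta w\theta^{-1}$ is again a product of commuting factors, namely $\Gamma_5^{-1}$, a twist about $b_1'$, and $u_{r+5}^{\pm1}$. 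A suitable product (or commutator) of $w$ with $\theta w\theta^{-1}$ then cancels the $\Gamma_5$- and $u_{r+5}$-parts and reduces to a short word in twists about the disjoint curves $b_1$ and $b_1'$; one more step of the same type, or a direct application of commutativity, isolates the single Dehn twist $B_1=t_{b_1}\in G$. If it is more convenient, the relation $y^2=t_{\partial K}$ together with the fact that $u_{r+5}^2$ is a Dehn twist about the boundary of a regular neighbourhood of the two exchanged crosscaps can be used instead, extracting a Dehn twist directly from $w^2$ after the Dehn-twist factors have been cleared.

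For the second stage I would use the explicit action of $T$ recorded just before the statement. From $T(b_i)=c_i$ and $T(c_j)=b_{j+1}$, conjugating $t_{b_1}$ by powers of $T$ produces $t_{b_i}^{\pm1}$ and $t_{c_j}^{\pm1}$ for all the relevant indices; and from $T^2(b_r)=a_1$ (resp. $T(b_r)=c_r$ and $T^2(c_r)=a_1$ when $g$ is even) one also obtains $t_{a_1}^{\pm1}$ and, in the even case, $C_r$. This already yields all $B_i$, all $C_j$ (with $C_r$ when $g$ is even) and $A_1$. To reach the remaining twists $A_i$ $(i\ge2)$ and $F_i$, I would bring in $\rho_3$ and $\rho_4$: in the sphere-with-crosscaps model these reflections carry the curves $b_i$ (or $c_j$) already handled onto the curves $a_i$ and $f_i$, so conjugation produces $t_{a_i}^{\pm1}$ and $t_{f_i}^{\pm1}$, and combining the $T$-orbit with the $\rho_3$- and $\rho_4$-orbits of the twist from Stage~1 exhausts the list. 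Should a direct reflection image fail to be available for some $f_i$, the chain and lantern relations among $a_i,b_i,c_i,f_i$ express $t_{f_i}$ in terms of twists already shown to lie in $G$.

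The hard part is not the algebra but the geometric bookkeeping underlying both stages: one must locate precisely, in the model of $N_{g,p}$ as a sphere with $g$ crosscaps and $p$ punctures, every curve $a_i,b_i,c_j,f_i$ together with $\gamma_5$ and the crosscaps numbered near $r+5$ relative to the four reflections $\rho_1,\rho_2,\rho_3,\rho_4$; verify the disjointness statements that make the commuting-factor argument of Stage~1 valid; and keep careful track of the exponents $\pm1$ introduced by conjugation with the orientation-reversing $\rho_j$. The genus hypotheses $g=2r+1\ge15$ and $g=2r+2\ge14$ are precisely what is needed for $\gamma_5$, $b_1$ and the crosscap $r+5$ to sit inside the surface with the required disjointness and for $T,\rho_3,\rho_4$ to have the stated actions; confirming this is where the bulk of the work lies.
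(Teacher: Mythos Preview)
Your outline has two genuine gaps, one in each stage.

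In Stage~1 the element $\theta$ you posit need not exist. In fact the paper records (just before Theorem~\ref{invo}) that $\rho_3(\gamma_5)=\gamma_5$, $\rho_3(b_1)=b_1$ and $\rho_3(\alpha_{r+5})=\alpha_{r+5}$, each with reversed orientation; this is exactly why $\rho_3\Gamma_5B_1u_{r+5}$ is an involution. So $\rho_3$ does \emph{not} move $b_1$ off itself while fixing the other two supports, and there is no reason a $T^k$-conjugate of $\rho_3$ or $\rho_4$ should behave any better. Your fallback via $u_{r+5}^2=t_{\partial K}$ still leaves a product of three commuting Dehn twists, not a single one, so the extraction problem is unchanged.

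In Stage~2 your description of how $\rho_3,\rho_4$ act is wrong. These reflections send $a_i$ to $f_i$ (and $\rho_4(a_1)=e_{p-1}$), but they fix the curves $b_i$ and $c_j$ setwise. Hence the $\langle T,\rho_3,\rho_4\rangle$-orbit of $b_1$ contains $a_1$, all the $b_i$ and $c_j$, $f_1$ and some $e_k$, but \emph{not} $a_2,\ldots,a_r$ (nor $f_2,\ldots,f_r$). Getting the twists $A_i$ for $i\ge2$ is precisely the hard part of the lemma, and a vague appeal to ``chain and lantern relations'' does not suffice: the standard chain relation produces boundary twists, not the $a_i$, and the relevant lantern relations require curves (like the $d_{1,i},d_{2,i}$) whose twists you have not yet shown to lie in $G$.

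The paper's proof proceeds quite differently. It never isolates a single twist directly from $w=\Gamma_5B_1u_{r+5}$; instead it conjugates $w$ by $T^2$ to obtain $G_2=\Gamma_7B_2u_{r+7}$, and then uses the key manoeuvre $G_3:=G_1^{G_1G_2}=B_2B_1u_{r+5}$, which exploits the intersection pattern of $\gamma_5,\gamma_7,b_1,b_2$ to replace $\Gamma_5$ by $B_2$. The quotient $G_1G_3^{-1}=\Gamma_5B_2^{-1}$ is then a pure ``ratio'' of twists. Iterating this idea produces a large supply of elements of the form $XY^{-1}$; a specific lantern relation (from \cite{apy2}) converts these ratios into the single twists $A_1,A_2,A_3$, and a further inductive lantern argument involving auxiliary curves $d_{1,i},d_{2,i}$ yields all remaining $A_i$. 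Only at the very end are $\rho_3,\rho_4$ used, and solely to pass from $A_i$ to $F_i$. Your plan would need a substitute for this whole ratio-plus-lantern mechanism.
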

\begin{proof}
 Let $G_1:=\Gamma_5B_1u_{r+5}$ and let $G$ be the subgroup of $\mod(N_{g,p})$ generated by the elements $T$, $\rho_3$, $\rho_4$, $G_1$ and $v_{r+3, p}$.

 Let $G_2$ be the element obtained by the conjugation of $G_1$ by $T^{2}$. It follows from
\[
T^{2}(\gamma_{5},b_{1},\alpha_{r+5})=(\gamma_{7},b_{2},\alpha_{r+7})
\]
that
\[
G_2=G_{1}^{T^{2}}=\Gamma_7 B_2 u_{r+7}\in G. 
\]
 
 Let $G_3$ denote the element $G_1^{G_1G_2}$, which is contained in the subgroup $G$. Therefore,
\[
G_3=B_2B_1u_{r+5}
\]
Let us give more details of this calculation  since we use similar calculations in the rest of the proof.  It can be verified that the diffeomorphism $G_1G_2$ sends the curves $\lbrace \gamma_5, b_1, \alpha_{r+5} \rbrace$ to the curves $\lbrace b_2, b_1, \alpha_{r+5} \rbrace$, respectively. Then we get the element
\begin{eqnarray*}
G_3&=&G_1^{G_1G_2}\\
&=&(G_1G_2)(\Gamma_5 B_1 u_{r+5})(G_1G_2)^{-1}\\
&=&B_2 B_1u_{r+5}.
\end{eqnarray*}
 Thus the subgroup $G$ contains the  element
 \[
 G_1G_3^{-1}=\Gamma_5B_2^{-1}.
 \]
This implies that the element
\[
B_3\Gamma_7^{-1}=(B_2\Gamma_5^{-1})^{T^2}
\] 
 is also contained in $G$. Using these elements, we also obtain 
\begin{eqnarray*}
G_4&=&(B_3\Gamma_7^{-1})G_2=B_3B_2u_{r+7}\in G,\\
G_5&=&G_{4}^{T^{-3}}=C_1A_1u_{r+4}\in G,\\
G_6&=&G_4^{G_4G_5}=B_3C_1u_{r+7} \in G.
\end{eqnarray*} 
From these, we get the element 
 $G_4G_6^{-1}=B_{2}C_{1}^{-1}$, which is in $G$.
Hence, we have the following elements:
 \begin{eqnarray*}
C_1B_1^{-1}&=&(B_2C_1^{-1})^{T^{-1}},\\
B_1B_2^{-1}&=&(B_1C_1^{-1})(C_1B_2^{-1}),\\
C_1C_2^{-1}&=&(B_1B_2^{-1})^{T},\\
B_2C_2^{-1}&=&(B_1C_1^{-1})^{T^{2}},\\
\Gamma_5C_2^{-1}&=&(\Gamma_5B_{2}^{-1})(B_2C_2^{-1})\textrm{ and }\\
A_2A_1^{-1}&=&\Gamma_1A_1^{-1}=(\Gamma_5C_2^{-1})^{T^{-4}},
\end{eqnarray*}
which are all contained in $G$.
Furthermore, the subgroup $G$ contains the following elements:
\begin{eqnarray*}
A_1B_1^{-1}&=&(C_1B_2^{-1})^{T^{-2}},\\
A_1B_2^{-1}&=&(A_1B_1^{-1})(B_1B_2^{-1}),\\
A_1C_1^{-1}&=&(A_1B_1^{-1})(B_1B_2^{-1})(B_2C_1^{-1}),\\
C_1A_2^{-1}&=&(C_1A_1^{-1})(A_1A_2^{-1}),\\
C_2A_1^{-1}&=&(C_2C_1^{-1})(C_1A_2^{-1})(A_2A_1^{-1}),\\
C_2B_1^{-1}&=&(C_2A_1^{-1})(A_1B_1^{-1})\textrm{ and }\\
B_3B_1^{-1}&=&(B_2C_1^{-1})^{T^{2}}(C_2B_1^{-1}).
\end{eqnarray*}
\begin{figure}[hbt!]
\begin{center}
\scalebox{0.45}{\includegraphics{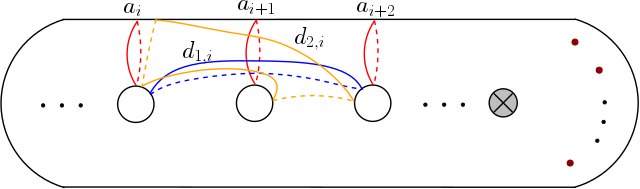}}
\caption{The curves $d_{1,i}$ and $d_{2,i}$.}
\label{curves2}
\end{center}
\end{figure}
Using the lantern relation given in the proof of \cite[Theorem 2.1]{apy2}, it can be verified that the subgroup $G$ contains the elements $A_1$, $A_2$, $A_3$, $B_i$ and $C_i$ for $1\leq i \leq r-1$ and $B_r$ (also the element $C_r$ when $g$ is even).

We now inductively show that $A_j\in G$ for $j=4, \ldots, r$. To do this, we first use the diffeomorphism
\[
(A_iB_{i+1}^{-1})(A_iC_i^{-1})(A_iC_{i+1}^{-1})(A_iB_{i+1}^{-1}),
\]
which sends the curves $(a_{i+1},a_i)$ to $(d_{2,i},a_i)$, where the curves are shown in Figure~\ref{curves2} for $i+2\leq r$.
This gives rise to the following identity:
\[
\big((A_iB_{i+1}^{-1})(A_iC_i^{-1})(A_iC_{i+1}^{-1})(A_iB_{i+1}^{-1})\big)(A_{i+1}A_i^{-1})\big((A_iB_{i+1}^{-1})(A_iC_i^{-1})(A_iC_{i+1}^{-1})(A_iB_{i+1}^{-1})\big)^{-1}=D_{2,i}A_i^{-1}.
\]
Thus, we have $D_{2,i}C_{i+1}^{-1}=(D_{2,i}A_i^{-1})(A_iC_{i+1}^{-1})$. Similarly, one can show that
\[
(C_{i+1}B_i^{-1})(C_{i+1}A_i^{-1})(C_{i+1}C_i^{-1})(C_{i+1}B_i^{-1})(d_{2,i}, c_{i+1})=(d_{1,i}, c_{i+1}),
\]
which implies that 
\[
\big((C_{i+1}B_i^{-1})(C_{i+1}A_i^{-1})(C_{i+1}C_i^{-1})(C_{i+1}B_i^{-1}) \big)(D_{2,i}C_{i+1}^{-1})\big((C_{i+1}B_i^{-1})(C_{i+1}A_i^{-1})(C_{i+1}C_i^{-1})(C_{i+1}B_i^{-1}) \big)^{-1}=D_{1,i}C_{i+1}^{-1}.
\]
We also have $D_{1,i}A_i^{-1}=(D_{1,i}C_{i+1}^{-1})(C_{i+1}A_i^{-1})$. By the lantern relation given in the proof of ~\cite[Theorem 2.1]{apy2}, we get
\[
A_{i+2}=(A_{i+1}C_i^{-1})(D_{1,i}C_{i+1}^{-1})(D_{2,i}A_i^{-1}).
\]
Therefore, since $A_1$, $A_2$ and $A_3$ belong to $G$, we inductively obtain that all $A_j\in G$ for $j=1,\ldots,r$. \\
On the other hand, since $\rho_3(a_1)=f_1$ and $\rho_4(a_i)=f_i$ for $i=2,\ldots,r$,
\[
F_1=(A_1^{-1})^{\rho_3} \in G \textrm{ and } F_i=(A_i^{-1})^{\rho_4} \in G \ \textrm{ for } \ i=2,\ldots,r.
\]
This completes the proof.
\end{proof}
Let $G$ be the subgroup of $\mod(N_{g,p})$
generated by the elements given explicitly in Lemma~\ref{lemma2} with the given conditions.
\begin{lemma}\label{lemma3}
The subgroup $G$ contains the elements $y$ and $v_{g,i}$ (and also $v_{g-1,i}$ if $g$ is even) for $i=1,2,\ldots,p$.
\begin{proof}
Let us first prove that $G$ contains a $y$-homeomorphism. By Lemma~\ref{lemma2}, the crosscap transposition 
\[
u_{r+5}=(B_1^{-1})(A_2^{-1})^{T^{4}}(\Gamma_5B_1u_{r+5})\in G
\]
since each factors on the right hand side are contained in $G$. Using the action of $T$, we get $u_i\in G$ for $i=1,\ldots,g-1$. In particular, $u_1\in G$ and it is easy to see that
\[
y=A_1u_1\in G.
\]
In order to show that $v_{g,i}\in G$ for each $i=1,2,\ldots,p$, observe that
\[
v_{r+3,p}^{T^{k}}=v_{r+3+k,p}\in G \textrm{ for }k=1,\ldots g-r-3.
\]
In particular, $v_{g,p}\in G$. Using the element $\rho_3\rho_4$, we have the following elements:
\[
v_{g,p}^{\rho_3\rho_4}=v_{g,1},
\quad v_{g,1}^{\rho_3\rho_4}=v_{g,2},\quad \ldots\quad  ,v_{g,p-2}^{\rho_3\rho_4}=v_{g, p-1},
\]
which are all contained in $G$.

In the case of even $g$, similarly, using the elements $v_{g-1,p}$ and $\rho_3\rho_4$ that are contained in $G$, we get the elements
\[
v_{g-1,p}^{\rho_3\rho_4}=v_{g-1,1}\in G,
\quad v_{g-1,1}^{\rho_3\rho_4}=v_{g-1,2}\in G,\quad \ldots\quad  ,v_{g-1,p-2}^{\rho_3\rho_4}=v_{g-1,p-1}\in G,
\]
which completes the proof.
\end{proof}
\end{lemma}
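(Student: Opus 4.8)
The plan is to produce a $Y$-homeomorphism inside $G$ first, and then all of the puncture slides, exploiting the fact that Lemma~\ref{lemma2} has already placed every Dehn twist $A_i,B_i,C_j,F_i$ (together with $C_r$ when $g$ is even) inside $G$, along with $T$, $\rho_3$, $\rho_4$ and the generator $v_{r+3,p}$. To get $y$, the idea is to strip the Dehn-twist part off the generator $G_1=\Gamma_5B_1u_{r+5}$: one first identifies the curve $\gamma_5$ as lying in the $T$-orbit of one of the curves handled in Lemma~\ref{lemma2} — inspecting Figure~\ref{curves}, $\Gamma_5$ is a conjugate of $A_2$ by a power of $T$ — so that $\Gamma_5^{-1}B_1^{-1}\in G$, and multiplying $G_1$ on the left by this element leaves exactly the crosscap transposition $u_{r+5}\in G$. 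Conjugating $u_{r+5}$ by the powers of $T$, which cyclically permutes the crosscaps consistently with $T(\alpha_l)=\alpha_{l+1}$, then gives $u_i\in G$ for all $i=1,\dots,g-1$; in particular $u_1\in G$, and since $A_1\in G$ we conclude that the $Y$-homeomorphism $y=A_1u_1$ lies in $G$.

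For the puncture slides, start from the generator $v_{r+3,p}$, which slides $z_p$ along the one-sided curve $\delta_{r+3}$. Since $T$ shifts the crosscap index by one (it sends $\delta_j$ to $\delta_{j+1}$, matching its action on the $\alpha$-curves) and fixes $z_p$, conjugation gives $v_{r+3,p}^{T^{k}}=v_{r+3+k,p}\in G$ for $k=1,\dots,g-r-3$; taking $k=g-r-3$ yields $v_{g,p}\in G$, and when $g$ is even, taking $k=g-r-4$ yields $v_{g-1,p}\in G$. To change the puncture index we conjugate by $\rho_3\rho_4$, which by Figures~\ref{ER34} and~\ref{OR34} cyclically permutes the punctures $z_p\mapsto z_1\mapsto z_2\mapsto\cdots$ while fixing the crosscaps $\delta_g$ and $\delta_{g-1}$; iterating gives $v_{g,i}\in G$ for all $i=1,\dots,p$, and likewise $v_{g-1,i}\in G$ for all $i$ in the even case. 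Combined with Lemma~\ref{lemma2} and Theorem~\ref{thmmk}, this shows that $G$ already contains a generating set for $\mod_{0}(N_{g,p})$.

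The routine group theory here is immediate; the real content is geometric bookkeeping from the figures, and that is where the obstacle lies. The points that must be checked are: that $\gamma_5$ is genuinely in the $T$-orbit of a curve appearing in Lemma~\ref{lemma2}, so that $\Gamma_5$ is a conjugate of a Dehn twist already known to lie in $G$ and the isolation of $u_{r+5}$ is legitimate; that $T$ acts on the crosscaps $\{u_i\}$ and on the one-sided curves $\{\delta_j\}$ by the expected ``shift by one'' rule, so that the conjugations really do sweep out all of $u_1,\dots,u_{g-1}$ and all of $v_{r+3,p},\dots,v_{g,p}$; and that $\rho_3\rho_4$ is precisely the claimed $p$-cycle on the punctures that fixes $\delta_g$ (and $\delta_{g-1}$). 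Once these facts are read off from Figures~\ref{curves},~\ref{ER34} and~\ref{OR34}, the rest of the argument is purely formal.
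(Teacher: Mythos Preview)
Your proposal is correct and follows essentially the same route as the paper: isolate $u_{r+5}$ from $G_1$ by recognising $\Gamma_5=(A_2)^{T^4}$, conjugate by powers of $T$ to obtain all $u_i$ and hence $y=A_1u_1$, then use $T$-conjugation on $v_{r+3,p}$ to shift the crosscap index and $\rho_3\rho_4$-conjugation to cycle the puncture index. One small overstatement in your closing sentence: combining with Lemma~\ref{lemma2} and Theorem~\ref{thmmk} does \emph{not} yet give all of $\mod_0(N_{g,p})$, since the Dehn twists $E_l$ about the curves $e_l$ are still missing---that is precisely the content of Lemma~\ref{lemma4}.
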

\begin{lemma}\label{lemma4}
The group $\mod_{0}(N_{g,p})$ is contained in the group $G$.
\end{lemma}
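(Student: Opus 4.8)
The plan is to invoke Korkmaz's generating set for $\mod_0(N_{g,p})$ from Theorem~\ref{thmmk} and verify that every listed generator lies in $G$. Concretely, Theorem~\ref{thmmk} says that $\mod_0(N_{g,p})$ is generated by $\{t_c, y, v_{g,i} : c\in\mathcal{C}, 1\le i\le p\}$ when $g$ is odd, and by $\{t_c, y, v_{g-1,i}, v_{g,i} : c\in\mathcal{C}, 1\le i\le p\}$ when $g$ is even. So it suffices to show that $G$ contains: (1) the Dehn twists $A_i, B_i, F_i$ for $i=1,\dots,r$, the $C_j$ for $j=1,\dots,r-1$, and also $C_r$ when $g$ is even; (2) a $Y$-homeomorphism $y$; and (3) the puncture slides $v_{g,i}$ (and $v_{g-1,i}$ when $g$ is even) for all $i$.

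The first block is exactly the content of Lemma~\ref{lemma2}, and blocks (2) and (3) are exactly Lemma~\ref{lemma3}. The only gap to close is the bookkeeping over which curves in $\mathcal{C}$ have been accounted for. First I would recall that $\mathcal{C} = \{a_i, b_i, f_i, c_j, e_k\}$ with the index ranges depending on the parity of $g$, and note that the curves $e_k$ do not appear in the above lemmas — these are the curves whose twists are $E_k$, and one must observe (as is standard, and implicit in Korkmaz's set-up on the sphere-with-crosscaps model) that each $t_{e_k}$ can be written in terms of the $t_{c_i}$ together with the elements already produced, or that the model has been chosen so that the relevant $e_k$-twists are themselves among the conjugates appearing; alternatively one notes that on the sphere-with-punctures-and-crosscaps picture in Figure~\ref{curves} the curve $e_k$ bounds together with two punctures and a Dehn-twist curve, so $t_{e_k}$ lies in the subgroup generated by $v_{g,k}$, $v_{g,k+1}$ and an already-obtained twist via a relation in a punctured sphere. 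I would spell out that each such $t_{e_k}$ is thereby in $G$.

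Having collected that $G$ contains all of $t_c$ for $c\in\mathcal{C}$, together with $y$ and all the required puncture slides $v_{g,i}$ (and $v_{g-1,i}$ for even $g$), I conclude by Theorem~\ref{thmmk} that $G$ contains a generating set of $\mod_0(N_{g,p})$, hence $\mod_0(N_{g,p})\subseteq G$. The main obstacle — and really the only non-mechanical point — is handling the curves $e_k$ and the puncture slides uniformly across the odd and even cases and making sure nothing in Korkmaz's list has been overlooked; once the correspondence between Figure~\ref{G} and the curves named in Lemmas~\ref{lemma2} and~\ref{lemma3} is pinned down, the proof is a short citation. I would therefore keep the write-up to: restate Theorem~\ref{thmmk}'s list, match each item to Lemma~\ref{lemma2} or Lemma~\ref{lemma3} (with the brief $e_k$ argument inserted), and invoke Theorem~\ref{thmmk}.
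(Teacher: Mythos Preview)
Your overall strategy matches the paper's: invoke Theorem~\ref{thmmk}, note that Lemmas~\ref{lemma2} and~\ref{lemma3} cover the twists $A_i,B_i,F_i,C_j$ together with $y$ and the puncture slides, and then deal with the remaining twists $E_k=t_{e_k}$. The difference is entirely in how the $E_k$ are obtained, and here your proposal has a genuine gap.

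You acknowledge that the $E_k$ are not furnished by Lemmas~\ref{lemma2} or~\ref{lemma3}, but the arguments you sketch for producing them are not valid as stated. Saying that $t_{e_k}$ ``can be written in terms of the $t_{c_i}$ together with the elements already produced'' is not a standard fact and would need justification; the suggestion that $t_{e_k}$ arises from $v_{g,k}$, $v_{g,k+1}$ and an ``already-obtained twist via a relation in a punctured sphere'' is likewise not a known relation---the square of a puncture slide gives a twist about the boundary of a once-punctured M\"obius band, not about a curve of type $e_k$, and there is no lantern-type identity on a three-holed sphere that would output $E_k$ this way. So none of the proposed mechanisms actually establishes $E_k\in G$.

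The paper's argument is both simpler and uses the specific generators of $G$ that you have not touched: the reflections $\rho_3$ and $\rho_4$ are themselves among the defining generators of $G$, and on the sphere-with-crosscaps model $\rho_4$ sends $a_1$ to $e_{p-1}$, so $E_{p-1}=(A_1^{-1})^{\rho_4}\in G$. Then $\rho_3(e_{p-1})=e_1$ gives $E_1\in G$, $\rho_4(e_1)=e_{p-2}$ gives $E_{p-2}\in G$, and alternating $\rho_3,\rho_4$ produces all the $E_k$. In short, the missing idea in your proposal is to exploit $\rho_3,\rho_4\in G$ directly rather than to look for algebraic relations among twists and slides.
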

\begin{proof}
The group $G$ contains the Dehn twist elements $A_i$, $B_i$, $F_i$ and $C_j$ for $i=1,\ldots,r$ and $j=1,\ldots,r-1$ (also $C_r$ when $g$ is even) by Lemma~\ref{lemma2} and elements $y$ and $v_{g,k}$ (and also $v_{g-1,k}$ if $g$ is even) for $k=1,2,\ldots,p$ by Lemma~\ref{lemma3}. It follows from Theorem~\ref{thmmk} that it is enough to prove that $E_{l}\in G$ for $l=1,2,\ldots,p-1$. Since the reflection $\rho_4$ maps $a_1$ to $e_{p-1}$, we have
\[
(A_1^{-1})^{\rho_4}=E_{p-1}\in G.
\]
The reflection $\rho_3$ maps the curve $e_{p-1}$ to $e_1$. Then we have
\[
(E_{p-1}^{-1})^{\rho_3}=E_1\in G.
\]
Similarly, using the reflection $\rho_4$, we get
\[
(E_1^{-1})^{\rho_4}=E_{p-2}\in G.
\]
Continuing in this way, we conclude that the elements $E_{1},E_{2},$ $\ldots,E_{p-1}$ belong to $G$, which completes the proof.
\end{proof}
Now, it is time to prove the main theorems.
\begin{theorem} \label{element}
The subgroup $G$ is equal to the
mapping class group $\mod(N_{g,p})$.
\end{theorem}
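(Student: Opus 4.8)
The plan is to invoke Lemma~\ref{lemma1} (equivalently, the remark immediately following it) with $G=\mod(N_{g,p})$, $N=\mod_0(N_{g,p})$, $K=Sym_p$, and $\Gamma=G$ the subgroup generated by the five elements listed in Lemma~\ref{lemma2}. By that remark it suffices to verify two things: that $G$ contains all of $\mod_0(N_{g,p})$, and that the image of $G$ under the puncture-permutation homomorphism $\pi\colon\mod(N_{g,p})\to Sym_p$ is all of $Sym_p$.

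The first point is already done: Lemma~\ref{lemma4} states precisely that $\mod_0(N_{g,p})\subseteq G$. So the only work left is the surjectivity onto $Sym_p$. For this I would examine how each generator acts on the punctures $\{z_1,\dots,z_p\}$. The element $T=\rho_2\rho_1$ acts by the transposition $(z_1\,z_2)$, as recorded in the list of properties of $T$ after Figure~\ref{curves}. The reflections $\rho_3$ and $\rho_4$ (Figures~\ref{ER34} and~\ref{OR34}) act on the punctures by reflecting the arrangement of the $p$ punctures; composing them, the product $\rho_3\rho_4$ acts as a rotation sending $z_p\mapsto z_1$, $z_1\mapsto z_2$, \dots, that is, as a $(p-1)$-cycle or $p$-cycle on a suitable subset of the punctures (this is exactly the action used in Lemmas~\ref{lemma3} and~\ref{lemma4} to propagate the $v_{g,i}$ and $E_l$ around). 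A transposition of two adjacent points together with a full cyclic permutation generates $Sym_p$, so $\pi(G)=Sym_p$.

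The main (and really only) obstacle is bookkeeping: one must pin down the precise permutations induced by $\rho_3$, $\rho_4$ and $T$ from the figures, check that the adjacent transposition $(z_1\,z_2)$ from $T$ and the cycle from $\rho_3\rho_4$ are compatible in the sense that conjugating the transposition by powers of the cycle sweeps out all adjacent transpositions, and confirm this works uniformly for both parities of $p$ and of $g$. The remaining generators $\Gamma_5B_1u_{r+5}$ and $v_{r+3,p}$ fix the punctures setwise (indeed $v_{r+3,p}$ is a puncture slide, which returns $z_p$ to itself, and $\Gamma_5B_1u_{r+5}$ is supported away from the punctures), so they contribute nothing to $\pi(G)$ but also create no difficulty.

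Concretely, the proof reads: by Lemma~\ref{lemma4}, $G\supseteq\mod_0(N_{g,p})$; from the displayed action of $T$ we have $\pi(T)=(z_1\,z_2)$, and from Figures~\ref{ER34} and~\ref{OR34} the element $\rho_3\rho_4$ induces a $p$-cycle $\sigma$ on the punctures with $\sigma(z_1)=z_2$; since an adjacent transposition together with a $p$-cycle through it generates $Sym_p$, we get $\pi(G)=Sym_p$; now Lemma~\ref{lemma1} applied to the exact sequence $1\to\mod_0(N_{g,p})\to\mod(N_{g,p})\to Sym_p\to 1$ yields $G=\mod(N_{g,p})$.
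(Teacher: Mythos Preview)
Your proposal is correct and follows essentially the same route as the paper: invoke Lemma~\ref{lemma4} for the containment $\mod_0(N_{g,p})\subseteq G$, then exhibit the transposition $(1\,2)$ and the $p$-cycle $(1\,2\,\ldots\,p)$ in $\pi(G)$ to get surjectivity onto $Sym_p$, and finish with Lemma~\ref{lemma1}. The only cosmetic difference is that the paper names $\rho_1$ as the element mapping to $(1\,2)$ while you use $T$; your choice is in fact cleaner, since $T$ is explicitly one of the five generators of $G$.
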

\begin{proof}
By Lemma~\ref{lemma4}, the group $\mod_{0}(N_{g,p})$ is contained in the group $G$. Hence, by Lemma~\ref{lemma1}, we need to prove that $G$ is mapped surjectively onto $Sym_p$. The elements $\rho_3\rho_4 \in G$ and $\rho_1\in G$ have the images $(1,2,\ldots,p)\in Sym_p$ and $(1,2)\in Sym_p$, respectively. This finishes the proof since these elements generate the whole group $Sym_p$
\end{proof}
Consider the surface $N_{g,p}$ of genus $g\geq 14$. Since 
\[
\rho_3(\gamma_5)=\gamma_5, \quad \rho_3(b_1)=b_1,\textrm{ and } \rho_3(\alpha_{r+5})=\alpha_{r+5},
\]
with reverse orientation, the element
$\rho_3\Gamma_5B_1u_{r+5}$ is an involution. Similarly, if $g=2r+1$ or $2r+2$, it follows from $v_{p,r+3}^{\rho_1}=v_{p,r+3}^{-1}$ that the element $\rho_1v_{p,r+3}$ is an involution.

\begin{theorem}\label{invo}
For any  $g=2r+1\geq15$ or $g=2r+2\geq14$, the mapping class group $\mod(N_{g,p})$ is generated by the involutions
\[
\lbrace \rho_1, \rho_2, \rho_3, \rho_4, \rho_3\Gamma_5B_1u_{r+5}, \rho_1v_{p,r+3} \rbrace
\]
\end{theorem}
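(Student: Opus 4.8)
The plan is to reduce everything to Theorem~\ref{element}. Let $H$ denote the subgroup of $\mod(N_{g,p})$ generated by the six listed elements. Since the inclusion $H\subseteq\mod(N_{g,p})$ is clear, it suffices to show that $H$ contains the five generators $T$, $\rho_3$, $\rho_4$, $\Gamma_5B_1u_{r+5}$, $v_{r+3,p}$ appearing in Lemma~\ref{lemma2}, because by Theorem~\ref{element} those five elements generate $\mod(N_{g,p})$.

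First I would confirm that the six elements really are involutions. The maps $\rho_1,\rho_2,\rho_3,\rho_4$ are reflections, hence of order two by construction. For the remaining two I would invoke the computations already indicated before the statement: $\rho_3$ preserves each of $\gamma_5$, $b_1$, $\alpha_{r+5}$ (reversing the orientation of a regular neighbourhood), so conjugation by $\rho_3$ sends $\Gamma_5B_1u_{r+5}$ to its inverse, whence $(\rho_3\Gamma_5B_1u_{r+5})^2 = \rho_3^2(\Gamma_5B_1u_{r+5})^{-1}(\Gamma_5B_1u_{r+5}) = 1$; likewise $\rho_1 v_{p,r+3}\rho_1 = v_{p,r+3}^{-1}$ gives $(\rho_1 v_{p,r+3})^2 = 1$. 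Both facts can be read off from the models for the reflections in Figures~\ref{ER34}, \ref{OR34}, \ref{ER12}, \ref{OR12}.

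Next I would recover the five generators inside $H$, one at a time. Since $\rho_1,\rho_2\in H$, we have $T=\rho_2\rho_1\in H$. The elements $\rho_3$ and $\rho_4$ lie in $H$ by definition. Using that $\rho_3$ is an involution, $\rho_3\cdot(\rho_3\Gamma_5B_1u_{r+5}) = \Gamma_5B_1u_{r+5}\in H$; using that $\rho_1$ is an involution, $\rho_1\cdot(\rho_1 v_{p,r+3}) = v_{p,r+3} = v_{r+3,p}\in H$. Hence $H$ contains $\{T,\rho_3,\rho_4,\Gamma_5B_1u_{r+5},v_{r+3,p}\}$, so $H=\mod(N_{g,p})$ by Theorem~\ref{element}.

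I do not expect any real obstacle: the substance of the argument was already carried out in Lemmas~\ref{lemma2}--\ref{lemma4} and Theorem~\ref{element}, and this last step is essentially formal, the only genuine input being the elementary verification that $\rho_3\Gamma_5B_1u_{r+5}$ and $\rho_1 v_{p,r+3}$ square to the identity, which is the one place where the symmetry of the chosen model under the reflections is used.
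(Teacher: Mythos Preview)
Your proposal is correct and follows essentially the same approach as the paper: the paper's proof simply says the result follows from Theorem~\ref{element} together with $T=\rho_2\rho_1$, and your argument spells this out explicitly by recovering each of the five generators of $G$ inside $H$. Your additional verification that the six elements are involutions matches the discussion the paper places immediately before the theorem statement.
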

\begin{proof}
The proof follows from Theorem~\ref{element} and by the fact that $T=\rho_2\rho_1$.
\end{proof}



\begin{thebibliography}{xxxx}



\bibitem{apy2} T. Altun{\"{o}}z, M. Pamuk and O. Yildiz:
\emph{Generating the mapping class group of  a nonorientable surface by two elements or by three involutions}, Bull Braz Math Soc, New Series {\bf{53}}, (2022), 1145--1156.




\bibitem{c} D. R. J. Chillingworth:  \emph{A finite set of generators for the homeotopy group of a non-orientable surface}, Proc. Cambridge Philos. Soc. {\bf{65}}, (2) (1969), 409--430.







\bibitem{lickorish} W. B. R. Lickorish:  \emph{Homeomorphisms of non-orientable two-manifolds}, Proc. Camb.  Philos. Soc. {\bf{59}}, (1963), 307--317.





\bibitem{mk4} M. Korkmaz: \emph{Mapping class groups of nonorientable surfaces}, Geometriae Dedicata {\bf{89}}(1), (2002), 107--131.






\bibitem{sz1} B. Szepietowski:  \emph{ Involutions in mapping class groups of non-orientable surfaces}, Collect. Math.
{\bf{55}}, (3) (2004), 253--260.

\bibitem{sz2} B. Szepietowski:  \emph{The mapping class group of a nonorientable surface is generated by three
elements and by four involutions}, Geom. Dedicata {\bf{117}}, (1) (2006), 1--9.



\bibitem{yoshihara} K. Yoshihara: \emph{Generating the mapping class group of a non-orientable punctured surface  by involutions}, ArXiv
math.GT/2212.09924, v1 20 Dec 2022.




















 






\end{thebibliography}
\end{document}